\theoremstyle{plain}
\newtheorem{thm}{Теорема}[section]
\newtheorem{ass}[thm]{Утверждение}
\newtheorem{lem}[thm]{Лемма}
\theoremstyle{definition}
\newtheorem{dfn}[thm]{Определение}
\newtheorem{rk}[thm]{Замечание}
\title
{
Про непрерывность одной операции с выпуклыми компактами в конечномерных нормированных пространствах\footnote{Работа выполнена при поддержке гранта РНФ (проект 21-11-00355) в МГУ имени М.~В.~Ломоносова, а также при поддержке гранта Фонда развития теоретической физики и математики ``БАЗИС'' (договор No.~21-8-3-3-1 от 01.10.2021).}}
{
About the continuity of one operation with convex compacts in finite--dimensional normed spaces}
\author
{
Галстян А. Х. (г. Москва)}
{
Galstyan A. Kh. (Moscow)}
\keywords
{
метрическая геометрия, выпуклые множества, расстояние Хаусдорфа, непрерывные деформации.
}
{
metric geometry, convex sets, Hausdorff distance, continuous deformations.
}
\begin{document}

\maketitle

\enmaketitle

\section{Введение}

Выпуклые множества и различные их преобразования естественным образом возникают в вопросах функционального анализа, теории вероятностей, теории информации, теории экстремальных сетей и линейного программирования. 

Действительно, любая система линейных неравенств в $\mathbb{R}^n$ с геометрической точки зрения задаёт некоторый выпуклый (быть может, некомпактный) многогранник. Такие системы возникают повсеместно, например, в проблемах оптимизации, начиная от планирования и организации производств~\cite{Kant} и заканчивая расписанием работы компьютерных процессоров~\cite{Drag}. Во второй половине 20-го века сформировалась даже отдельная ветвь математики под названием теория оптимального управления~\cite{Optimum}. В рамках этой теории можно, например, выделить такие работы как~\cite{LP1},~\cite{LP2},~\cite{LP3},~\cite{LP4} и~\cite{LP5}, в которых предлагаются методы решения проблем оптимизации на выпуклых подмножествах. 

Важность в современной математике множеств, обладающих свойством выпуклости, является стимулом к изучению особенностей работы с этими объектами. В рамках данной деятельности можно, например, отметить статью~\cite{Ops}, которая посвящена, в частности, изучению вопросов непрерывности таких операций над выпуклыми компактами, как сумма Минковского и $L_p$ сумма. 

В настоящей работе изучается вопрос непрерывности в конечномерном нормированном пространстве над полем $\mathbb{R}$ следующей операции. Пусть в таком пространстве даны два непустых выпуклых компакта $A$ и $B,$ и пусть $r\in \bigl[\min\limits_{a\in A} |aB|, +\infty\bigr).$ Положим $$f(r) = B_r(A)\cap B,$$ где $B_r(A)$ --- замкнутая окрестность радиуса $r$ с центром в $A,$ см. определение~\ref{two}. Данная работа посвящена доказательству непрерывности функции $f,$ где на пространстве непустых компактов задана топология, порождённая метрикой Хаусдорфа (эта метрика описана в разделе~\ref{Def}, определение~\ref{four}).

Отображение $f$ появляется естественным образом, например, в рамках теории экстремальных сетей в пространствах с метрикой Хаусдорфа~\cite{FS}. Многие результаты в этой теории опираются на всевозможные деформации множеств вида $B_r(A)\cap B.$

В метрической геометрии есть ряд стандартных функций, например, расстояние между точками или от точки до множества, которые являются 1--липшицевыми~\cite{Mend}, откуда прямо следует их непрерывность. Интуитивно кажется, что этим свойством должна обладать и функция $f.$ Однако это не так, что усложняет процесс доказательства её непрерывности.

Действительно, даже если мы рассмотрим случай плоскости с евклидовой нормой, то при изменении $r$ на малую величину $\delta>0$ в положительную сторону расстояние по Хаусдорфу между $B_r(A)\cap B$ и $B_{r+\delta}(A)\cap B$ может измениться вовсе не на $\delta,$ а на величину существенно большую, см. рис.~\ref{Examp :image}.

\begin{figure}[h]
\center{\includegraphics[scale=0.75]
{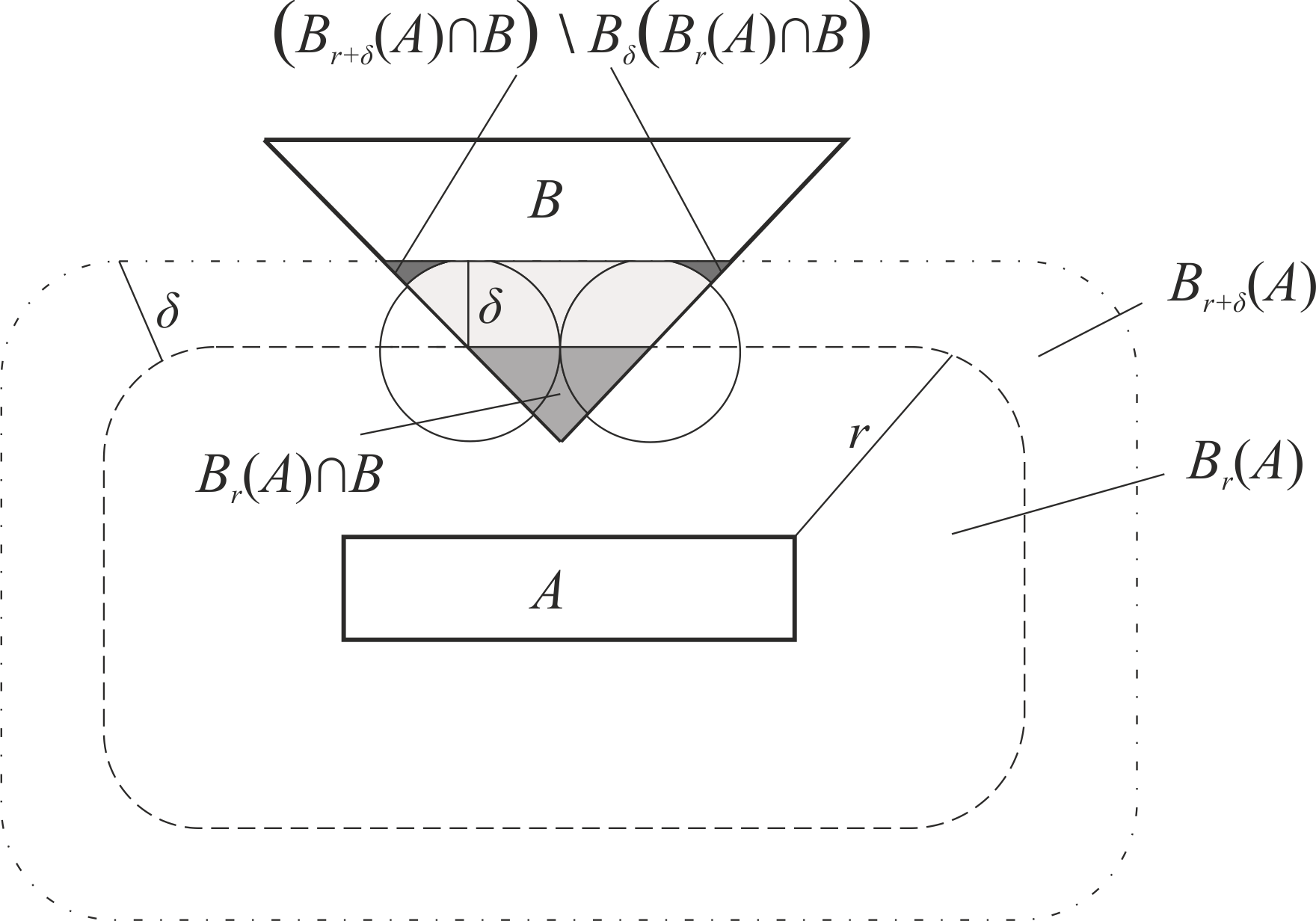}}
\caption{Пример, когда $d_H\bigl(B_r(A)\cap B, B_{r+\delta}(A)\cap B\bigr) > \delta.$}
\label{Examp :image} 
\end{figure}

На этом рисунке изображены два выпуклых компакта $A$ и $B.$ Компакт $A$ представляет собой прямоугольник, компакт $B$ --- треугольник. В этом примере $B_{r+\delta}(A)\cap B\not \subset B_{\delta}\bigl(B_r(A)\cap B\bigr)$ ввиду того, что наклонная к отрезку длиннее перпендикуляра к нему. Следовательно, $$d_H\bigl(B_r(A)\cap B, B_{r+\delta}(A)\cap B\bigr) > \delta.$$

Отметим, что для наличия непрерывности требование выпуклости к обоим компактам здесь по существу. А именно, для невыпуклых компактов несложно привести пример, когда $f$ уже не будет непрерывным, см. рис.~\ref{Examp2 :image}.
\begin{figure}[h]
\center{\includegraphics[scale=0.75]
{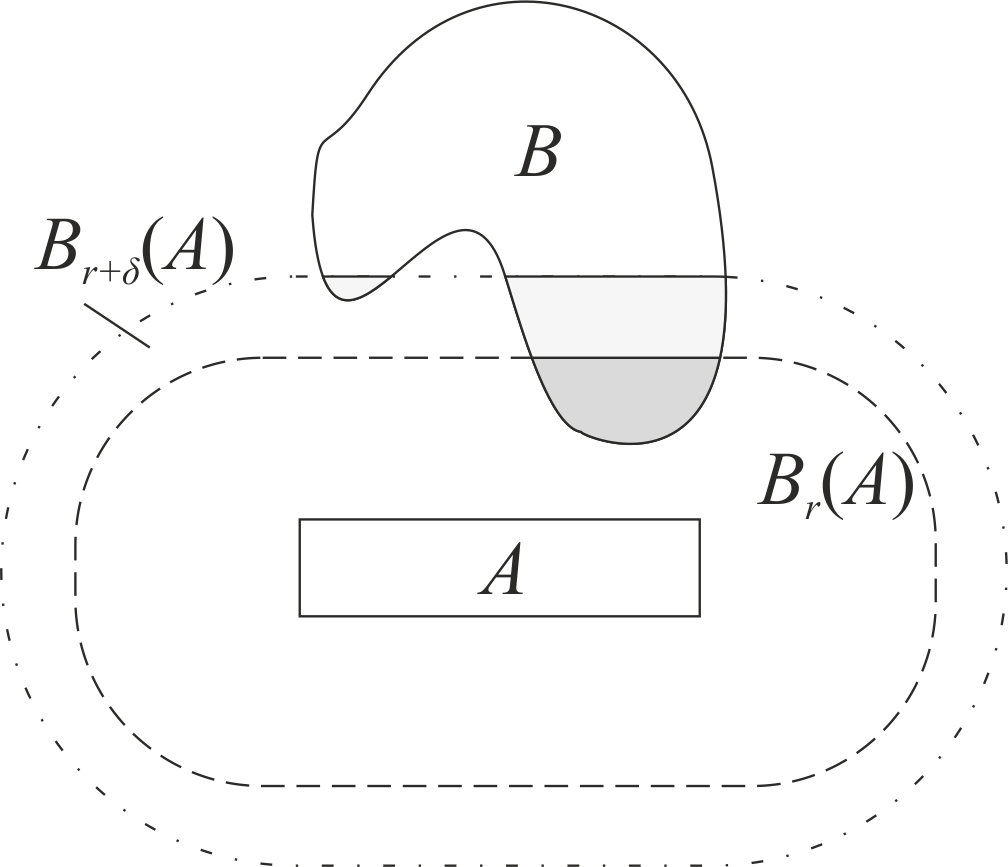}}
\caption{Пример, когда $f$ не является непрерывным в случае невыпуклых компактов.}
\label{Examp2 :image} 
\end{figure}

В самом деле, на рис.~\ref{Examp2 :image} верхний компакт $B$ является уже невыпуклым, и мы замечаем, что в таком случае деформация множества $B_r(A)\cap B$ при росте $r$ терпит разрыв --- скачок, ввиду появления в некоторый момент в пересечении новой компоненты связности. Однако в выпуклом случае новых компонент не возникает.

Главным результатом статьи является теорема~\ref{cont}.

Автор выражает благодарность своему научному руководителю, профессору А. А. Тужилину, и профессору А. О. Иванову за постановку задачи и постоянное внимание к ней в процессе совместной работы.

\section{Необходимые определения и утверждения}\label{Def}
\markright{Необходимые определения и утверждения}

Пусть дано некоторое метрическое пространство $(X, \rho).$ Для удобства расстояние между двумя произвольными точками $a, b\in X$ будем обозначать через $|ab|$ вместо $\rho(a,b).$

В определениях~\ref{one},~\ref{two},~\ref{three}, и~\ref{four} множества $A\subset X$ и $B\subset X$ непусты.

\begin{dfn}\label{one}
\textit{Расстоянием от точки} $p\in X$ \textit{до множества} $A\subset X$ называется величина
$$|p A| = \inf\{|pa|\,:\,a\in A\}.$$
\end{dfn}

Согласно~\cite{Mend} справедливо следующее утверждение, которое нам понадобится далее.

\begin{ass}[\cite{Mend}]\label{first}
Пусть $A$ --- непустое подмножество пространства $X$ и функция $f\colon X \rightarrow \mathbb{R}$ задана правилом $f\colon x \mapsto |xA|.$
Тогда функция $f$ непрерывна.
\end{ass}

\begin{dfn}\label{two}
Множество 
$$B_r(A) = \{p\,:\,|pA|\le r\}$$
называется \textit{замкнутой окрестностью} $A$ \textit{радиуса} $r.$
\end{dfn}

\begin{dfn}\label{three}
Множество 
$$U_r(A) = \{p\,:\,|pA| < r\}$$
называется \textit{открытой окрестностью} $A$ \textit{радиуса} $r.$
\end{dfn}

Когда $A = \{a\},$ где $a$ --- некоторая точка пространства $X,$ то для удобства вместо $B_r\bigl(\{a\}\bigr)$ и $U_r\bigl(\{a\}\bigr)$ будем писать $B_r(a)$ и $U_r(a)$ соответственно.

\begin{dfn}\label{four}
\textit{Расстоянием Хаусдорфа} между $A$ и $B$ называется величина
$$d_H(A, B) = \inf\bigl\{r\,:\, A\subset B_r(B), B\subset B_r(A)\bigr\}.$$
\end{dfn}

Обозначим множество непустых замкнутых и ограниченных подмножеств пространства $X$ через $\mathcal{H}(X).$ Из~\cite{Bur} и~\cite{IT} известно, что $d_H$ на $\mathcal{H}(X)$ задаёт метрику. Метрическое пространство $\bigl(\mathcal{H}(X), d_H\bigr)$ обычно называют \textit{гиперпространством} над пространством $X.$

Нам также потребуется следующее определение.

\begin{dfn} \label{dfn_3}
Пусть $M$ --- непустое подмножество $X.$ Множество всех подмножеств $X$ обозначим через $2^X.$ Отображение $P_M\colon X \rightarrow 2^X$, заданное по правилу $$P_M\colon x \mapsto \{z\in M : |xz| = |xM|\},$$ называется \textit{метрической проекцией} $X$ на $M.$
\end{dfn}

\begin{rk}
Для любого $M\in \mathcal{H}(X)$ и для любой точки $x\in X$ множество $P_M(x)$ непусто. 
\end{rk}

Пусть $X$ --- конечномерное нормированное пространство. Тогда верен следующий факт (см., например,~\cite{C}).

\begin{ass}[\cite{C}] \label{Proj}
Пусть $M\subset X$ --- непустой выпуклый компакт. Тогда для любой точки $x\in X\setminus M$, для любой точки $y\in P_M(x)$ и для всех $\lambda \ge 0$ выполняется
$$y\in P_M\bigl((1-\lambda)y + \lambda x\bigr).$$
\end{ass}

\section{Основная часть}
\markright{Основная часть}

Всюду далее $X$ --- конечномерное нормированное пространство над полем $\mathbb{R}$.

\begin{lem}\label{bound}
Пусть $A$ --- непустое подмножество $X$ и $\varepsilon\ge 0.$ Тогда $|p A| = \varepsilon$ для любой точки $p\in \partial B_{\varepsilon}(A).$
\end{lem}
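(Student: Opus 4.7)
План доказательства следующий. Основная идея состоит в применении непрерывности функции расстояния до множества, гарантированной утверждением~\ref{first}. Сперва я бы заметил, что множество $B_\varepsilon(A)$ является прообразом замкнутого луча $(-\infty,\varepsilon]$ при непрерывной функции $g\colon x\mapsto |xA|,$ поэтому $B_\varepsilon(A)$ замкнуто в $X.$ Следовательно, граница $\partial B_\varepsilon(A)$ содержится в самом $B_\varepsilon(A),$ откуда сразу получается неравенство $|pA|\le \varepsilon$ для любой точки $p\in \partial B_\varepsilon(A).$

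Для обратного неравенства я бы рассуждал от противного: пусть $|pA|<\varepsilon$ в некоторой точке $p\in \partial B_{\varepsilon}(A).$ Положив $\eta=\varepsilon-|pA|>0$ и применяя непрерывность $g$ в точке $p,$ можно подобрать такое $\delta>0,$ что $\bigl||xA|-|pA|\bigr|<\eta$ при всех $x\in U_\delta(p).$ Тогда $|xA|<|pA|+\eta=\varepsilon,$ то есть $U_\delta(p)\subset B_\varepsilon(A).$ Но это означает, что $p$ --- внутренняя точка множества $B_\varepsilon(A),$ что противоречит предположению $p\in \partial B_\varepsilon(A).$ Полученное противоречие вместе с уже доказанным неравенством $|pA|\le \varepsilon$ даёт требуемое равенство $|pA|=\varepsilon.$

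Особых трудностей в этом доказательстве не ожидается: ключевой ингредиент --- непрерывность функции расстояния до множества --- уже сформулирован как утверждение~\ref{first}, а само рассуждение по существу сводится к наблюдению, что граница замкнутого подуровневого множества непрерывной функции лежит в её уровневой поверхности. Отметим попутно, что конечномерность и нормированность пространства в этой лемме никак не используются; приведённое рассуждение проходит в любом метрическом пространстве.
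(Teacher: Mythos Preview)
Your argument is correct and follows essentially the same approach as the paper: both proofs rest on the continuity of the distance function from утверждение~\ref{first}, splitting the equality into $|pA|\le\varepsilon$ and $|pA|\ge\varepsilon$. The paper phrases this sequentially (taking sequences $\{x_n\}\subset B_\varepsilon(A)$ and $\{y_n\}\subset X\setminus B_\varepsilon(A)$ converging to $p$ and passing to the limit), whereas you use the equivalent open/closed-set formulation; this is a cosmetic difference, not a substantive one.
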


\begin{proof}

Рассмотрим последовательности точек $\{x_n\}\subset B_{\varepsilon}(A)$ и $\{y_n\}\subset X\setminus B_{\varepsilon}(A),$ сходящиеся к $p.$ Заметим, что $|x_i A| \le \varepsilon$ и $|y_j A| > \varepsilon$ для всех $i,j.$ Согласно утверждению~\ref{first}, а также ввиду $||p - x_i|| \rightarrow 0$ при $i \rightarrow \infty$ и $||p - y_j|| \rightarrow 0$ при $j \rightarrow \infty$ получаем $|p A| \le \varepsilon$ и $|p A|\ge \varepsilon.$ Значит, $|p A| = \varepsilon.$  

\end{proof}

Далее нам понадобятся обозначения $$[a, b) = \bigl\{(1-\lambda) a +  \lambda b \text{ } \bigl | \text{ } \lambda \in [0,1)\bigr\},$$ $$(a, b] = \bigl\{(1-\lambda) a +  \lambda b \text{ } \bigl | \text{ } \lambda \in (0,1]\bigr\},$$ $$[a, b] = \bigl\{(1-\lambda) a +  \lambda b \text{ } \bigl | \text{ } \lambda \in [0,1]\bigr\},$$ $$|A\,B| = \min\limits_{a\in A} |aB|,$$ при $a, b\in X$ и $A, B\in \mathcal{H}(X).$

\begin{thm}\label{cont}
Пусть $A$ и $B$ --- непустые выпуклые компакты в $X.$ Тогда $$f\colon \bigl[|A\,B|, +\infty\bigr) \rightarrow \mathcal{H}(X),$$ $$f\colon r \mapsto B_r(A)\cap B,$$ непрерывна.
\end{thm}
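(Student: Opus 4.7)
The plan is to verify continuity of $f$ at each fixed $r_0 \in \bigl[|A\,B|, +\infty\bigr)$ via a sequential argument: for any sequence $r_n \to r_0$ inside the domain, I show that both $f(r_n) \subset B_\varepsilon\bigl(f(r_0)\bigr)$ and $f(r_0) \subset B_\varepsilon\bigl(f(r_n)\bigr)$ hold for all sufficiently large $n$, which together give $d_H\bigl(f(r_n), f(r_0)\bigr) \to 0$.

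The first inclusion is the easy direction and uses only compactness plus continuity of the distance to a set. I would assume for contradiction that along a subsequence there are points $p_n \in f(r_n)$ with $|p_n\,f(r_0)| > \varepsilon$. Since every $p_n$ lies in the compact set $B$, a further subsequence satisfies $p_n \to p_\infty \in B$. The bound $|p_n A| \le r_n \to r_0$ combined with continuity of $x \mapsto |xA|$ (Proposition~\ref{first}) forces $|p_\infty A| \le r_0$, hence $p_\infty \in f(r_0)$; this contradicts $|p_n\,f(r_0)| > \varepsilon$ together with $p_n \to p_\infty$.

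For the other inclusion, the subsequence with $r_n \ge r_0$ is handled for free by the monotonicity $f(r_0) \subset f(r_n)$, so I may restrict attention to $r_n < r_0$, which in particular forces $r_0 > |A\,B|$. Suppose there exist $p_n \in f(r_0)$ with $|p_n\,f(r_n)| > \varepsilon$ and extract $p_n \to p_\infty \in f(r_0)$. If $|p_\infty A| < r_0$, then $|p_\infty A| \le r_n$ eventually, so $p_\infty \in f(r_n)$, contradicting the hypothesis. The main obstacle is thus the boundary case $|p_\infty A| = r_0$ (where $p_\infty \in \partial B_{r_0}(A)$ by Lemma~\ref{bound}): in this configuration I must produce, arbitrarily close to $p_\infty$, a point that still lies in $B$ but strictly deeper inside $B_{r_0}(A)$.

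To resolve the boundary case I will exploit both convexities simultaneously. Using $r_0 > |A\,B|$, I pick $b_0 \in B$ realizing $|b_0 A| = |A\,B| < r_0$, and for $t \in [0,1]$ set $p(t) = (1-t)\,p_\infty + t\,b_0$; convexity of $B$ gives $p(t) \in B$. The key estimate $|p(t)\,A| \le (1-t)\,|p_\infty A| + t\,|b_0 A|$ I will prove by taking $a_\infty \in P_A(p_\infty)$ and $a_0 \in P_A(b_0)$, using convexity of $A$ to place $(1-t)\,a_\infty + t\,a_0$ in $A$, and applying the triangle inequality in the norm. Then the choice $t_n = (r_0 - r_n)/(r_0 - |A\,B|) \to 0$ yields $|p(t_n)\,A| \le r_n$, so $p(t_n) \in f(r_n)$, while $|p_\infty - p(t_n)| = t_n\,|p_\infty - b_0| \to 0$, delivering the final contradiction and completing the proof.
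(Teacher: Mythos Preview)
Your argument is correct. Your approach, however, differs substantially from the paper's. The paper works in $\varepsilon$--$\delta$ style, splitting into right and left continuity. For right continuity it studies the set $K = B \cap \partial B_\varepsilon\bigl(B_r(A)\cap B\bigr)$ and takes $\delta = \bigl|K\,B_r(A)\bigr|$; for left continuity it fixes a point $p \in U_r(A)\cap B$, applies the contraction $g(x) = \lambda p + (1-\lambda)x$ to all of $M = B_r(A)\cap B$, and then invokes the Accessibility Lemma together with Утверждение~\ref{Proj} on metric projections to show $g(M) \subset B_{r'}(A)$. You replace all of this by sequential compactness: the inclusion $f(r_n) \subset B_\varepsilon\bigl(f(r_0)\bigr)$ follows from compactness of $B$ and continuity of $x\mapsto|xA|$ alone, without any convexity; and for the hard direction $f(r_0) \subset B_\varepsilon\bigl(f(r_n)\bigr)$ with $r_n<r_0$ you use the single elementary inequality $|p(t)\,A| \le (1-t)\,|p_\infty A| + t\,|b_0 A|$, proved directly from convexity of $A$, in place of the Accessibility Lemma and Утверждение~\ref{Proj}. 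Your route is shorter and strictly more elementary, at the cost of not producing an explicit modulus of continuity; the paper's construction does name a concrete $\delta$ for each $\varepsilon$, though that $\delta$ is itself defined only implicitly through distances to boundaries. A minor remark: your ``boundary case'' computation with $t_n = (r_0 - r_n)/(r_0 - |A\,B|)$ actually works for all $p_\infty \in f(r_0)$, since $|p_\infty A| \le r_0$ already gives $|p(t_n)\,A| \le (1-t_n)r_0 + t_n|A\,B| = r_n$, so the separate treatment of $|p_\infty A| < r_0$ is not strictly needed.
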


\begin{proof}

Возьмём произвольную точку $r\in \bigl[|A\,B|, +\infty\bigr)$ и произвольное $\varepsilon >0.$ Отметим, что в таком случае $B_r(A)\cap B\neq\emptyset.$ Покажем сначала, что функция непрерывна справа. Рассмотрим следующие случаи.

Случай первый:  $$K := B\cap \partial B_{\varepsilon}\bigl(B_r(A)\cap B\bigr)\neq\emptyset.$$ Введём обозначение $$\delta = \bigl|K \, B_r(A)\bigr|.$$

Допустим, что $\delta = 0.$ Тогда это означает, что существует точка $x\in K$ такая, что $x\in B_r(A).$ При этом $x\in B,$ так как $x\in K.$ Значит, $x\in B_r(A)\cap B.$ Таким образом, $x\in \partial B_{\varepsilon}\bigl(B_r(A)\cap B\bigr),$ что силу леммы~\ref{bound} означает $\bigl|x \, (B_r(A)\cap B)\bigr| = \varepsilon>0,$ и при этом $x\in B_r(A)\cap B,$ то есть $\bigl|x \, (B_r(A)\cap B)\bigr| = 0.$ Получили противоречие. Поэтому $\delta>0.$ 

Рассмотрим $r'\ge r$ такой, что $r' - r < \delta$. Для удобства введём обозначения $$M = B_r(A)\cap B,$$ $$M' = B_{r'}(A)\cap B.$$ Имеем $M\subset B_{\varepsilon}(M')$, так как $r\le r'.$ Покажем теперь, что $M'\subset B_{\varepsilon}(M).$ Допустим, существует точка $x\in M'$ такая, что $x\notin B_{\varepsilon}(M).$ Пусть $y\in P_{M}(x).$ Так как $r\le r',$ то $y\in M'.$ Обозначим через $s$ отрезок, соединяющий точку $x$ с точкой $y.$  В силу выпуклости $M'$ имеем $s\subset M'.$ Так как $y\in M,$ то $y\in  B_{\varepsilon}(M).$ Значит, найдётся точка $z\in s$ такая, что $z\in \partial B_{\varepsilon}(M).$ Так как $s\subset M'=B_{r'}(A)\cap B,$ то $z\in K=B\cap \partial B_{\varepsilon}(M).$ Это означает, что $K\cap M'\neq \emptyset.$ Поэтому $K\cap B_{r'}(A)\neq \emptyset.$ Но так как $\delta = \bigl|K \, B_r(A)\bigr|$ и $r' - r < \delta,$ то $K\cap B_{r'}(A) = \emptyset.$ Получили противоречие. Значит, $M'\subset B_{\varepsilon}(M).$ Следовательно, $d_H(M', M) \le \varepsilon.$ 

Случай второй: $K = \emptyset.$ Тогда это означает, что $B\subset B_{\varepsilon}(M).$ Но для любого $r' \ge r$ имеем $M'\subset B\subset B_{\varepsilon}(M).$ При этом, так как $r'\ge r,$ верно $M\subset M'\subset B_{\varepsilon}(M').$ Значит, $d_H\bigl(M', M) \le \varepsilon.$

Отсюда получаем, что $f$ непрерывна справа. Покажем теперь непрерывность слева. 

Зафиксируем произвольную точку $r\in \bigl(|A\,B|, +\infty\bigr).$ Так как $r > |A\,B|,$ то существует $p\in U_{r}(A)\cap B.$ Рассмотрим отображение $$g(x) = \lambda p + (1-\lambda) x = x + \lambda(p-x),$$ где $\lambda \in (0,1).$

Покажем, что коэффициент $\lambda$ можно выбрать так, чтобы было верно $||x - g(x)|| \le \varepsilon$ для всех $x\in M.$ Имеем $$||x - g(x)|| = \lambda ||p - x||.$$ Обозначим $\max\limits_{x\in M} ||p - x||$ через $L.$ Тогда $0 \le \lambda ||p - x|| \le \lambda L$ для всех $\lambda\in [0,1]$ и $x\in M.$ Следовательно, существует $\lambda\in (0,1)$ такой, что $\lambda L \le \varepsilon.$ Значит, при таком $\lambda$ верно $||x - g(x)|| \le \varepsilon$ для всех $x\in M.$

В силу непрерывности функции $g$ множество $g(M)$ --- компакт. Поэтому корректно определена следующая величина: $$\delta = |g(M) \, \partial B_r(A)|.$$ 

Покажем, что $\delta > 0.$ По построению $g(x)\in (x,p]$ для любой точки $x\in M.$ Также $p\in U_r(A)$ и при этом $x\in B_r(A).$ Поэтому согласно Accessibility lemma~\cite{Int_1} имеем $(x,p]\subset U_r(A).$ Значит, $g(x)\in U_r(A).$ Следовательно, $g(M)\cap \partial B_r(A) = \emptyset.$ Отсюда, так как множество $\partial B_r(A)$ тоже является компактом, верно $\delta > 0.$

Покажем теперь, что для любого $r'\in \bigl[|A\,B|, r\bigr)$ такого, что $r-r'<\delta,$ верно $g(x)\in B_{r'}(A)$ при $x\in M.$ Допустим, что $g(x)\notin B_{r'}(A).$ Значит, $|g(x)\, A|>r'.$ Пусть $y\in P_A\bigl(g(x)\bigr).$ Значит, $$||g(x) - y|| > r'.$$ Проведём луч $l$ из точки $y,$ проходящий через $g(x).$ В какой-то точке $l$ пересечёт $\partial B_r(A),$ обозначим эту точку через $z.$ Следовательно, $|zA| = r.$ Согласно утверждению~\ref{Proj} имеем $y\in P_A(z),$ поэтому $||z-y|| = r.$ При этом $|g(x)\,\partial B_r(A)| \ge \delta > r-r'.$ Поэтому $$||z - g(x)|| > r-r'.$$ Отсюда получаем $$r = ||z - y|| = ||z-g(x)|| + ||g(x)-y|| > r - r' + r' = r.$$ Получили противоречие, следовательно, $g(x)\in B_{r'}(A)$ при всех $x\in M.$

Заметим, что для любого $x\in M$ имеем $[x,p]\subset M$ в силу выпуклости множеств $A$ и $B.$ Значит, $g(x)\in M = B_r(A)\cap B$ для всех $x\in M$ и поэтому $g(x)\in B.$ Таким образом, $$g(x)\in B_{r'}(A)\cap B =: M'$$ для всех $x\in M.$ Следовательно, $g(M)\subset M'.$ При этом в силу выбора коэффициента $\lambda$ $$M\subset B_{\varepsilon}\bigl(g(M)\bigr)\subset B_{\varepsilon}(M').$$ Отсюда, так как $r' < r,$ получаем $d_H\bigl(M', M) \le \varepsilon.$ Значит, $f$ непрерывна слева в силу произвольности выбора $r$ и $\varepsilon$.   

Таким образом, функция $f$ непрерывна на $\bigl[|A\,B|, +\infty\bigr).$ Теорема доказана.

\end{proof}

\section{Заключение}

В данной работе была доказана непрерывная зависимость от $r\in \bigl[|A\,B|, +\infty\bigr)$ множества $B_r(A)\cap B,$ где $A$ и $B$ --- непустые выпуклые компактные подмножества конечномерного нормированного пространства $X$ над полем $\mathbb{R}.$ Такая операция возникает в задачах геометрической оптимизации с метрикой Хаусдорфа, например, при поиске компактов, реализующих минимум суммы расстояний до заданных элементов гиперпространства $\mathcal{H}(X),$ см.~\cite{FS}. 

Интересно было бы обобщить теорему~\ref{cont} на случай произвольных банаховых пространств. Более того, здесь имеется возможность посмотреть на саму операцию $f^{A,B}_r = B_r(A)\cap B$ под другим углом. Зафиксируем $r\in \bigl(|A\,B|, +\infty\bigr).$ Будет ли отображение $f^{A,B}_r$ непрерывно в точке $(A, B)\in \mathcal{H}(X)\times \mathcal{H}(X),$ и если да, то в каких случаях? Это также является вопросом возможных дальнейших исследований.


\begin{engbibliography}{99}
	
	\bibitem{Kant} Kantorovich,~L.~V. 1939, \textit{Mathematical methods of organization of production planning}, Edition of the Leningrad State University, Leningrad, p.~68.

	\bibitem{Drag} Aho,~A.~V., Lam,~M.~S., Seti,~R. \& Ulman,~D.~D. 2008, \textit{Compilers: principles, technologies and tools, 2--nd ed. : Trans. from English}, OOO ``I.~D.~Villiams'', Moscow, p.~1184.
	
	\bibitem{Optimum} Gabasov,~R. \& Kirillova,~F.~M. 1976, ``Optimal control methods'', \textit{Itogi nauki i tekhniki. Ser. Sovr. prob. mat.}, vol 6, pp. 133--259.

	\bibitem{LP1} Shevchenko,~V.~N. \& Zolotykh,~N.~Yu. 2004 \textit{Linear and integer linear programming}, Publishing house of the Lobachevsky Nizhny Novgorod State University, Nizhny Novgorod, p.~150.

	\bibitem{LP2} Lenstra, H. W. 1983, ``Integer Programming with a Fixed Number of Variables'',  \textit{Mathematics of Operations Research}, vol~8, no.~4, pp.~538--548.
 
	\bibitem{LP3} Kannan, R. 1987 ``Minkowski's Convex Body Theorem and Integer Programming'', \textit{Mathematics of Operations Research}, vol. 12, pp.~415--440.
 
	\bibitem{LP4} Glover, F. 1990 ``Tabu search--Part II'', \textit{ORSA Journal on Computing}, vol.~2, no.~1, pp.~4--32.
 
	\bibitem{LP5} Williams, H.~P. 2009 \textit{Logic and integer programming}, Springer New York, NY, p.~200.

	\bibitem{Ops} Gardner,~R.~J., Hug~D. \& Weil~W. 2013 ``Operations between sets in geometry'', \textit{J. Eur. Math. Soc.}, vol.~15, no.~6, pp.~2297--2352.

	\bibitem{Mend} Mendelson, B. 1990 \textit{Introduction to topology}, Dover Publications, p.~206.

	\bibitem{Bur} Burago,~D., Burago,~Yu. \& Ivanov,~S. 2004 \textit{A course in metric geometry}, Institute for Computer Research, Moscow--Izhevsk, p.~512.

	\bibitem{IT} Ivanov,~A.~O. \& Tuzhilin,~A.~A. 2017 \textit{Geometry of Hausdorff and Gromov--Hausdorff distances: the case of compact sets},  Faculty of mechanics and mathematics of MSU, Moscow, p.~111.

	\bibitem{C} Alimov,~A.~R. \& Tsarkov,~I.~G. 2014 ``Connection and other geometric properties of suns and Chebyshev sets'', \textit{Fundamental and applied mathematics}, vol.~19, no.~4, pp.~21--91.

	\bibitem{Int_1} Leonard,~I.~E. \& Lewis,~J.~E. 2015 \textit{Geometry of convex sets}, Wiley, p.~336. 

	\bibitem{FS} Galstyan,~A.~Kh., Ivanov,~A.~O. \& Tuzhilin,~A.~A. 2021 ``The Fermat--Steiner problem in the space of compact subsets of~$\mathbb R^m$ endowed with the Hausdorff metric'', \textit{Sb. Math.}, vol.~212, no.~1,  pp.~25--56.		

\end{engbibliography}

\label{end}

\end{document}